\documentclass[12pt,leqno]{article}
\usepackage{amsmath,amssymb,amsthm, epsfig}
\usepackage{amsfonts}
\usepackage{fontenc}
\usepackage{ulem}
\usepackage[mathscr]{eucal}

 %espacamento

\usepackage{mathptmx}

%___________________________________

\pagestyle{myheadings}

\markboth{\textsc{L. I. S. Souza}}{\textsc{Eigenvalues of $L_r$-operators}}
%\markright{\textsc{author1}}

\title{On the $L_r$-operators penalized by $(r+1)$-mean curvature}

\author{ \textsc{L. I. S. Souza}\footnote{This work was partially supported by UFC} 
}

%\date{}

%_____________________________________

%%%%%%%%%%%%%%_______________

\newlength{\hchng}
\newlength{\vchng}
\setlength{\hchng}{0.55in} \setlength{\vchng}{0.55in}
\addtolength{\oddsidemargin}{-\hchng}
\addtolength{\textwidth}{2\hchng}
\addtolength{\topmargin}{-\vchng}
\addtolength{\textheight}{2\vchng}

%_____________________________________

%_____________________________________
%_____________________________________

\def \R {\mathbb{R}}

%_________________
%\DeclareMathOperator*{\osc}{osc}
%_________________

%%%%%%%%%%%%%%_______________

\newtheorem{theorem}{Theorem}[section]
\newtheorem{lemma}[theorem]{Lemma}

\newtheorem{corollary}[theorem]{Corollary}

\theoremstyle{definition}

\newtheorem{remark}[theorem]{Remark}
\numberwithin{equation}{section}

%%%%%%%%%%%%%%%%%%%%%%%%%%%

\newcommand{\vertiii}[1]{{\left\vert\kern-0.25ex\left\vert\kern-0.25ex\left\vert #1 \right\vert\kern-0.25ex\right\vert\kern-0.25ex\right\vert}}

%%%%%%%%%%%%%________________

%_____________________________________

%_________________

\newcommand{\intav}[1]{\mathchoice {\mathop{\vrule width 6pt height 3 pt depth  -2.5pt
\kern -8pt \intop}\nolimits_{\kern -6pt#1}} {\mathop{\vrule width
5pt height 3  pt depth -2.6pt \kern -6pt \intop}\nolimits_{#1}}
{\mathop{\vrule width 5pt height 3 pt depth -2.6pt \kern -6pt
\intop}\nolimits_{#1}} {\mathop{\vrule width 5pt height 3 pt depth
-2.6pt \kern -6pt \intop}\nolimits_{#1}}}
%________________

%____________________________________

\newcommand{\Rmuv}{(-L_r +\mu)^{-1}}

\newcommand{\Rm}{R_{\mu}}
\newcommand{\Rg}{R_{\gamma}}

\newcommand{\roa}{W_r}

\newcommand{\Km}{K_{\mu}}
\newcommand{\Rz}{R_0}

\newcommand{\lambdad}{\lambda_1(-L_r)}
\newcommand{\dv}{\mathrm{div}}

\newcommand{\sA}{\textbf{A}}
\newcommand{\Lr}{{\mathcal{L}_r}}

%_________________

%_____________________________________

\date{\today}

\begin{document}
\maketitle

\begin{abstract}
In this paper, we establish the non-positivity of the second eigenvalue of the Schr\"odinger operator $-\textrm{div}\big( P_r \nabla\cdot\big) - W_r^2$ on a closed hypersurface $\Sigma^n$ of $\mathbb{R}^{n+1}$, where $W_r$ is a power of the $(r+1)$-th mean curvature of $\Sigma^n$. In the case that this eigenvalue is null we have a characterization of the sphere. This generalizes a result of Evans and Loss proved for the Laplace-Beltrame operator penalized by the square of the mean curvature.

\end{abstract}

%\tableofcontents

\section{Introduction}\label{sec:1}
In 1997, Evans and Loss \cite{Evans} obtained the following rigidity result:

\begin{theorem}Let $\Omega$ a smooth compact oriented hypersurface of dimension $d$ immersed in $R^{d+1}$; in particular self-intersections are allowed. The metric on that surface is the standard Euclidean metric inherited from $R^{d+1}$. Then the second eigenvalue $\lambda_2$ of the operator
$$H=-\Delta-\frac{1}{d}h^2$$
is strictly negative unless $\Omega$ is a sphere, in which case $\lambda_2$ equals zero.
\end{theorem}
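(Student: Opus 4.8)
\medskip
\noindent\textbf{Proof strategy.}\ The plan is to feed the coordinate functions of the immersion into the Rayleigh quotient of $H$ and to control the potential term by the classical Minkowski integral formula. Write $Q(u)=\int_\Omega\big(|\nabla^\Omega u|^2-\tfrac1d h^2u^2\big)\,dA$ for the associated quadratic form. First recall the spectral picture: $H$ is self-adjoint and bounded below on $L^2(\Omega)$ with discrete spectrum $\lambda_1<\lambda_2\le\lambda_3\le\cdots$, its ground state $\phi_1$ is simple and may be taken strictly positive, and the Courant--Fischer principle gives
\[
\lambda_2=\inf\Big\{\,Q(u)/\|u\|_{L^2}^2\ :\ 0\ne u\in H^1(\Omega),\ \textstyle\int_\Omega u\,\phi_1\,dA=0\,\Big\}.
\]
Let $X=(x_1,\dots,x_{d+1})$ be the immersion and let $c\in\R^{d+1}$ be the $\phi_1$-weighted centroid, $c_i=\big(\int_\Omega\phi_1\,dA\big)^{-1}\int_\Omega x_i\,\phi_1\,dA$. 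Each $u_i:=x_i-c_i$ is then $L^2$-orthogonal to $\phi_1$, and each $u_i$ is non-constant, since a compact immersed hypersurface of $\R^{d+1}$ cannot lie in a hyperplane (an immersion between $d$-manifolds is an open map, so its image would be a nonempty compact open subset of that hyperplane). Hence every $u_i$ is an admissible competitor for $\lambda_2$, and it suffices to prove $\sum_{i=1}^{d+1}Q(u_i)\le 0$: then $Q(u_{i_0})\le 0$ for some $i_0$, and consequently $\lambda_2\le 0$.

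Now carry out the computation. As $\nabla^\Omega x_i$ is the tangential component of the constant vector $e_i$, one has $\sum_{i}|\nabla^\Omega x_i|^2=\mathrm{tr}\,(\Pi_{T\Omega})=d$ pointwise, where $\Pi_{T\Omega}$ is orthogonal projection onto the tangent space; also $\sum_i u_i^2=|X-c|^2$. Hence $\sum_iQ(u_i)=\int_\Omega\big(d-\tfrac1d h^2|X-c|^2\big)\,dA$, so everything reduces to the inequality
\[
\int_\Omega h^2\,|X-c|^2\,dA\ \ge\ d^2\,|\Omega|.
\]
To obtain it, integrate the Bochner-type identity $\tfrac12\Delta^\Omega|X-c|^2=d+\langle X-c,\vec H\rangle$ over the closed manifold $\Omega$ --- here $\vec H=\Delta^\Omega X$ is the mean curvature vector and $|\vec H|=|h|$ --- which yields the Minkowski-type formula $\int_\Omega\langle X-c,\vec H\rangle\,dA=-d\,|\Omega|$. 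Then, by the triangle inequality and two applications of Cauchy--Schwarz, $d|\Omega|\le\int_\Omega|X-c|\,|h|\,dA\le\big(\int_\Omega|X-c|^2h^2\,dA\big)^{1/2}|\Omega|^{1/2}$, which is the claimed inequality after squaring. Therefore $\sum_{i=1}^{d+1}Q(u_i)\le 0$ and $\lambda_2\le 0$ always.

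For rigidity, suppose $\lambda_2=0$. Then $Q(u_i)\ge\lambda_2\|u_i\|^2=0$ for each $i$ while $\sum_iQ(u_i)\le 0$, so $Q(u_i)=0$ for every $i$; thus each $u_i$ attains the infimum above and is a $0$-eigenfunction, i.e. $\vec H=-\tfrac1d h^2(X-c)$. Moreover $\int_\Omega h^2|X-c|^2\,dA=d^2|\Omega|$, so equality holds throughout the chain above: the Cauchy--Schwarz equalities force $\vec H$ to be pointwise parallel to $X-c$ and $|X-c|\,|h|$ to equal a constant $k$. One checks $k>0$: if $k=0$, then since $X$ is an immersion (hence locally injective, so $\{X=c\}$ is finite) and $h$ is continuous, $h\equiv 0$ would follow, making $\Omega$ a closed minimal hypersurface of $\R^{d+1}$ --- impossible, because $|X|^2$ would be a non-constant subharmonic function on a closed manifold. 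With $k>0$ we have $h\ne 0$ and $X\ne c$ everywhere, hence $\vec H\ne 0$ everywhere; as $\vec H$ is normal to $\Omega$ and parallel to $X-c$, the field $X-c$ is everywhere normal to $\Omega$, so $|X-c|^2$ is constant along the connected manifold $\Omega$. Thus $\Omega$ is a round sphere centered at $c$. Conversely, on $S^d(R)$ one has $\tfrac1d h^2\equiv d/R^2$ while the spectrum of $-\Delta$ on $S^d(R)$ is $0,\,d/R^2,\,2(d+1)/R^2,\dots$, so $H$ has eigenvalues $-d/R^2,\,0,\,(d+2)/R^2,\dots$ and $\lambda_2=0$. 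Hence $\lambda_2<0$ unless $\Omega$ is a sphere.

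The step I expect to be the genuine obstacle is the rigidity analysis --- upgrading the pointwise proportionality $\vec H\parallel (X-c)$ to ``$X-c$ is normal everywhere'' --- which needs the constancy of $|X-c|\,|h|$ together with the non-existence of closed minimal hypersurfaces in Euclidean space. The remaining ingredients (simplicity and positivity of the ground state, the Courant--Fischer characterization of $\lambda_2$, the Bochner/Minkowski identity, and Cauchy--Schwarz) are standard.
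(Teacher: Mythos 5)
Your proof is correct in its essentials, but it follows a genuinely different route from the one taken here. In this paper the Harrell--Loss theorem is quoted as Theorem 1.1 and is recovered as the case $r=0$ of Theorem 1.2, whose proof runs through the Birman--Schwinger principle: producing a second negative eigenvalue of $-\Delta-\tfrac{1}{d}h^2$ is converted into exhibiting an eigenvalue of $K_\mu=W(-\Delta+\mu)^{-1}W$ exceeding $1$, which is done with the test functions $f_i$ satisfying $Wf_i=hN_i$ (so that $R_0(Wf_i)=x_i-(x_i)_\Sigma$), Minkowski's integral formula, and a continuity argument in $\mu$; the equality case is treated by Lagrange multipliers, leading to constant mean curvature and then to the sphere by the Ros/Alexandrov rigidity theorem. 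The point of the Birman--Schwinger detour is precisely to avoid having to produce test functions orthogonal to the unknown ground state $\phi_1$ (in that framework one only needs orthogonality to $W$, which the area centroid provides). You sidestep this differently and more economically: by centering the coordinates at the $\phi_1$-weighted centroid, the functions $u_i=x_i-c_i$ become admissible competitors in the Courant--Fischer characterization of $\lambda_2$, and since the Minkowski identity $\int_\Omega\langle X-c,\vec H\rangle\,dA=-d\,|\Omega|$ holds for \emph{every} constant vector $c$, the same computation closes. Your rigidity analysis via the equality cases of Cauchy--Schwarz is likewise self-contained and bypasses both the Lagrange-multiplier step and the appeal to Ros's theorem. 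Two small points you should still patch: (i) after concluding that $|X-c|$ is constant you only know that the image lies in a round sphere; since $X$ is then an immersion of a closed $d$-manifold into $S^d$ it is a covering map, hence a diffeomorphism for $d\ge 2$, while for $d=1$ multiply covered circles must be excluded by a direct check that their $\lambda_2$ is strictly negative; (ii) the simplicity and positivity of the ground state of a Schr\"odinger operator on a closed connected manifold, on which your orthogonality argument rests, deserves at least a citation. Neither affects the substance of the argument.
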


In particular, when $d=2$ the previous result gives a proof for a conjecture of Alikakos and Fusco about hyper surfaces embedded in $\mathbb{R}^3$ (cf. \cite{Evans}). 

The goal of this paper is to extend this result for a more general class of elliptic geometric operators. In order to state our main result, we need to introduce a few definitions and notation. Let $\phi\colon M^n\to \overline{M}^{n+1}$ be an isometric immersion, and denote by ${\bf A}$ the second fundamental form associated to $\phi$. It is known that ${\bf A}$ has $n$-geometric invariants. They are given by the elementary symmetric functions $S_r$ of the principal curvatures $\kappa_1,\dots, \kappa_n$ as follows:
\[
S_r:= \sum_{i_1<\dots< i_r}\kappa_{i_1}\dots \kappa_{i_r}\quad (1\leq r\leq n). 
\]
The $r$-curvature $H_r$ of $\phi$ is then defined by
$$
H_r:=\frac{S_r}{{n \choose r}}.
$$
Notice that $H_1$ corresponds to the mean curvature and $H_n$ the Gauss-Kronecker curvature of $\phi$. The Newton's transformations of $\phi$ are the operators $P_r$ defined inductively by
\[
P_0=I,
\]
\[
P_r=S_r I - {\bf A} P_{r-1}.
\]
The so-called $L_r$-operators are defined by $L_r:=\textrm{div}\big(P_r \nabla\cdot\big)$. It is known that if every $H_r$ is positive, then $L_r$ is elliptic. Let $c_r=(n-r){{n \choose r}}$ and define the potential $W_r=\big( c_r \,H_{r+1}^{\frac{r+2}{r+1}}\big)^{1/2}$. In what follows we consider the following class of divergence operators 
\[
\mathcal{L}_r:= -L_r - W^2_r. 
\]
We are now able to state the main result of this paper. 
\begin{theorem}\label{thm:M} Let $\Sigma^n$ be a $n$-dimensional closed hypersurface embedded in $\mathbb{R}^{n+1}$. Assume that $H_{r+1}>0$. Then 
\[
\lambda_2(\mathcal{L}_r)\leq 0,
\]
with equality occurring if and only if $\Sigma^n=\mathbb{S}^n$. 
\end{theorem}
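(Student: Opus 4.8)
The plan is to combine the Courant--Fischer variational principle with the coordinate functions of $\Sigma$ as test functions, after a centre--of--mass normalisation that makes them orthogonal to the first eigenfunction. First I would record the standing consequences of $H_{r+1}>0$: on a closed (connected) hypersurface, with the orientation for which $\Sigma$ lies in the $(r+1)$-th G\aa rding cone --- there is always a point where all principal curvatures are positive, e.g.\ a maximum of $|x|^{2}$, and the hypothesis together with connectedness keeps $\Sigma$ in that cone --- one has $H_{1},\dots,H_{r}>0$ and the Newton operators $P_{1},\dots,P_{r}$ positive definite. Hence $L_{r}$ is uniformly elliptic and self-adjoint in $L^{2}(\Sigma,dA)$, $\mathcal{L}_{r}$ has discrete spectrum, and $\lambda_{1}(\mathcal{L}_{r})$ is simple with a positive eigenfunction $u_{1}$, which I normalise so that $\int_{\Sigma}u_{1}\,dA=1$.

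Next I would exploit the freedom to translate $\Sigma$, arranging that $\int_{\Sigma}u_{1}\,x\,dA=0\in\mathbb{R}^{n+1}$. Then each coordinate function satisfies $\int_{\Sigma}u_{1}\,x_{i}\,dA=0$ and is therefore admissible in the variational characterisation $\lambda_{2}(\mathcal{L}_{r})=\min\{Q(v):v\neq 0,\ v\perp u_{1}\}$, where $Q(v)=\big(\int_{\Sigma}\langle P_{r}\nabla v,\nabla v\rangle\,dA-\int_{\Sigma}W_{r}^{2}v^{2}\,dA\big)\big/\int_{\Sigma}v^{2}\,dA$; so $\lambda_{2}(\mathcal{L}_{r})\le Q(x_{i})$ for every $i$. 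Summing the numerators over $i$ and using $\sum_{i}\nabla x_{i}\otimes\nabla x_{i}=\mathrm{Id}_{T\Sigma}$, $\mathrm{tr}\,P_{r}=(n-r)S_{r}=c_{r}H_{r}$, $\sum_{i}x_{i}^{2}=|x|^{2}$ and $W_{r}^{2}=c_{r}H_{r+1}^{(r+2)/(r+1)}$, I obtain
\[
\sum_{i=1}^{n+1}\Big(\int_{\Sigma}\langle P_{r}\nabla x_{i},\nabla x_{i}\rangle\,dA-\int_{\Sigma}W_{r}^{2}x_{i}^{2}\,dA\Big)=c_{r}\int_{\Sigma}H_{r}\,dA-c_{r}\int_{\Sigma}H_{r+1}^{\frac{r+2}{r+1}}|x|^{2}\,dA .
\]
Thus the whole inequality reduces to the scalar estimate $\int_{\Sigma}H_{r}\,dA\le\int_{\Sigma}H_{r+1}^{(r+2)/(r+1)}|x|^{2}\,dA$: once this holds, at least one summand above is $\le 0$, and the corresponding $x_{i_{0}}$ gives $\lambda_{2}(\mathcal{L}_{r})\le Q(x_{i_{0}})\le 0$.

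The proof of that scalar inequality is the crux, and it is where the exponent $\tfrac{r+2}{r+1}$ in $W_{r}$ is designed to make things close. Starting from the (translation-invariant) Hsiung--Minkowski formula $\int_{\Sigma}\big(H_{r}+\langle x,\nu\rangle H_{r+1}\big)\,dA=0$, I would estimate
\[
\int_{\Sigma}H_{r}\,dA=-\int_{\Sigma}\langle x,\nu\rangle H_{r+1}\,dA\le\int_{\Sigma}|x|\,H_{r+1}\,dA\le\Big(\int_{\Sigma}|x|^{2}H_{r+1}^{\frac{r+2}{r+1}}\,dA\Big)^{\!1/2}\Big(\int_{\Sigma}H_{r+1}^{\frac{r}{r+1}}\,dA\Big)^{\!1/2},
\]
the last step being Cauchy--Schwarz for the splitting $H_{r+1}=H_{r+1}^{\frac{r+2}{2(r+1)}}\cdot H_{r+1}^{\frac{r}{2(r+1)}}$; then the Newton--Maclaurin inequality $H_{r}\ge H_{r+1}^{r/(r+1)}$ (valid since $\Sigma$ lies in the $(r+1)$-G\aa rding cone) replaces the last factor by $\big(\int_{\Sigma}H_{r}\,dA\big)^{1/2}$, and dividing by $\big(\int_{\Sigma}H_{r}\,dA\big)^{1/2}>0$ finishes it. I expect the only real friction to be the exponent bookkeeping and the citations for the G\aa rding-cone positivities; everything else is routine.

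For the rigidity statement, suppose $\lambda_{2}(\mathcal{L}_{r})=0$. Since each $x_{i}\perp u_{1}$ in $L^{2}(dA)$, we get $Q(x_{i})\ge 0$, so the displayed sum is simultaneously $\le 0$ and $\ge 0$, hence $=0$, and every inequality in the chain above is an equality. Equality in $-\langle x,\nu\rangle\le|x|$, weighted by $H_{r+1}>0$, gives $x=-|x|\,\nu$, i.e.\ the position vector is everywhere normal to $\Sigma$; then $\nabla|x|^{2}=2x^{\top}=0$, so $|x|\equiv\rho$ is constant and $\Sigma\subseteq\mathbb{S}^{n}(\rho)$, which, being a closed hypersurface of the same dimension, forces $\Sigma=\mathbb{S}^{n}(\rho)$. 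Conversely, on $\mathbb{S}^{n}(\rho)$ one computes $P_{r}=\binom{n-1}{r}\rho^{-r}I$, hence $\mathcal{L}_{r}=-\binom{n-1}{r}\rho^{-r}\Delta-c_{r}\rho^{-(r+2)}$, whose eigenvalues are $\rho^{-(r+2)}\big(\binom{n-1}{r}k(k+n-1)-c_{r}\big)$ for $k=0,1,2,\dots$; since $\binom{n-1}{r}n=(r+1)\binom{n}{r+1}=c_{r}$, the value $k=1$ yields exactly $\lambda_{2}(\mathcal{L}_{r})=0$, completing the characterisation.
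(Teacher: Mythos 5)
Your proof is correct, but it follows a genuinely different route from the paper. The paper runs the argument through the Birman--Schwinger principle: it forms $K_\mu=W_r(-L_r+\mu)^{-1}W_r$, takes the test functions $f_i=(c_rH_{r+1}^{r/(r+1)})^{1/2}N_i$ (so that $W_rf_i=-L_r\phi_i$), shows $\sum_i\langle R_0(W_rf_i),W_rf_i\rangle=c_r\int_\Sigma H_r\geq c_r\int_\Sigma H_{r+1}^{r/(r+1)}=\sum_i\|f_i\|_2^2$ via Minkowski and Maclaurin, and then either produces two negative eigenvalues or, in the borderline case, runs a Lagrange-multiplier argument to conclude $H_{r+1}$ is constant and invokes Ros's theorem. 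You instead apply Courant--Fischer directly to the coordinate functions after the ground-state-weighted centre-of-mass normalisation, reduce to the scalar inequality $\int H_r\leq\int|x|^2H_{r+1}^{(r+2)/(r+1)}$, and prove it by Hsiung--Minkowski, a Cauchy--Schwarz splitting and the same Maclaurin inequality; the two arguments share exactly these geometric ingredients but compare $c_r\int H_r$ against different quantities ($\int W_r^2|x|^2$ for you, $\sum_i\|f_i\|_2^2$ for the paper), which is why you need the extra Cauchy--Schwarz step. Your approach buys two things: it is elementary and self-contained (no resolvent limits, no Birman--Schwinger, and crucially no appeal to Ros's rigidity theorem, since equality in $-\langle x,\nu\rangle\leq|x|$ already forces $x^{\top}=0$ and hence a round sphere), and it handles the equality case more cleanly than the paper's Lagrange-multiplier step. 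The paper's route, following Harrell--Loss, yields the sharper dichotomy that $\lambda_2<0$ strictly whenever some $d_i>0$ by exhibiting two genuinely negative eigenvalues. Two points you should make explicit to be fully rigorous: the simplicity and positivity of the ground state $u_1$ (needed for the characterisation $\lambda_2=\min\{Q(v):v\perp u_1\}$ and for $\int_\Sigma u_1>0$ in the translation step), and the fact that no coordinate $x_i$ vanishes identically on a closed hypersurface; both are routine. Your Gårding-cone justification that $H_{r+1}>0$ forces $P_r>0$ is also more careful than what the paper states.
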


The proof is based upon the following principle:
\begin{lemma}[Birman Schwinger's Principle] 
Let $L=\dv(A(x)\nabla .)$ be, where $A(x)$ is a matrix uniformly elliptic, $L:H^2(\Omega)\to L^2(\Omega)$, where $\Omega$ is a bounded domain.

Consider the self-adjoint operator $-L-W^2(x)$, where $W^2$ is relatively bounded with respect to $-L$ 
(i.e. $Dom(-L)\subset Dom(W^2)$ and exists constants $a,b\ge 0$, such that 
$${\|W^2u\|_2\le a\|u\|_2+b\|-Lu\|_2},\;\;\mathit{for}\; \mathit{all}\;{u\in Dom(-L)}).$$

A number $-\mu<0$ is eigenvalue of $-L-W^2$, if only if $1$ is eigenvalue of the bounded positive operator $$K_\mu:=W(-L+\mu)^{-1}W$$
\end{lemma}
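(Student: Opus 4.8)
The plan is to establish a bijection between the $(-\mu)$-eigenspace of $-L-W^2$ and the $1$-eigenspace of $K_\mu$, mediated by the bounded resolvent $(-L+\mu)^{-1}$. First I would record the structural facts that make everything well-posed. Since $L=\dv(A(x)\nabla\cdot)$ is uniformly elliptic, $-L$ is a non-negative self-adjoint operator, so for $\mu>0$ the shifted operator $-L+\mu$ is boundedly invertible with $(-L+\mu)^{-1}\ge 0$. Writing $B:=W(-L+\mu)^{-1/2}$, the relative boundedness hypothesis on $W^2$ is exactly what is needed to conclude that $B$ is a bounded operator on $L^2(\Omega)$; since $W$ is multiplication by a real function (hence self-adjoint) and $(-L+\mu)^{-1/2}$ is self-adjoint, one gets $K_\mu=BB^*$, so $K_\mu$ is bounded, self-adjoint and positive, and ``$1$ is an eigenvalue of $K_\mu$'' is a meaningful spectral statement.

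For the forward implication, suppose $u\neq 0$ satisfies $(-L-W^2)u=-\mu u$, i.e. $(-L+\mu)u=W^2u=W(Wu)$. Applying the bounded inverse gives $u=(-L+\mu)^{-1}W(Wu)$, and setting $\psi:=Wu$ and multiplying this identity by $W$ yields $\psi=W(-L+\mu)^{-1}W\psi=K_\mu\psi$. It then remains only to rule out $\psi=0$: if $Wu=0$ then $(-L+\mu)u=0$, forcing $u$ to be an eigenvector of $-L$ with eigenvalue $-\mu<0$, which is impossible since $-L\ge 0$. Hence $\psi\neq 0$ and $1$ is an eigenvalue of $K_\mu$.

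For the converse, suppose $\psi\neq 0$ with $K_\mu\psi=\psi$, and define $u:=(-L+\mu)^{-1}W\psi$. Then $Wu=W(-L+\mu)^{-1}W\psi=K_\mu\psi=\psi$, so applying $-L+\mu$ to $u$ gives $(-L+\mu)u=W\psi=W(Wu)=W^2u$, that is $(-L-W^2)u=-\mu u$. If $u=0$ then $\psi=Wu=0$, a contradiction; hence $u\neq 0$ and $-\mu$ is an eigenvalue of $-L-W^2$.

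The step I expect to require the most care is the first, functional-analytic one: verifying that $K_\mu$ genuinely maps $L^2(\Omega)$ into itself and that the substitutions $\psi=Wu$ and $u=(-L+\mu)^{-1}W\psi$ stay inside the relevant domains. Concretely, the relative boundedness inequality $\|W^2u\|_2\le a\|u\|_2+b\|-Lu\|_2$ must be promoted to boundedness of $B=W(-L+\mu)^{-1/2}$ (equivalently, a relative form-boundedness estimate), and one must check that $u=(-L+\mu)^{-1}W\psi$ actually lies in $Dom(-L)\subset Dom(W^2)$, so that the eigenvalue equation for $-L-W^2$ holds in the operator sense and not merely formally. Once this bookkeeping is secured, the algebraic identities above deliver the equivalence immediately.
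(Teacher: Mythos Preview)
Your argument is the standard Birman--Schwinger bijection and is correct; the algebraic core (passing from $u$ to $\psi=Wu$ and back via $u=(-L+\mu)^{-1}W\psi$, together with the non-triviality checks using $-L\ge 0$) is exactly how this equivalence is usually established, and you have also correctly flagged the one genuine technical point, namely that relative operator-boundedness of $W^2$ must be upgraded to the form-bound that makes $B=W(-L+\mu)^{-1/2}$ bounded and keeps the constructed $u$ in $Dom(-L)$.

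It is worth knowing, however, that the paper does not prove this lemma at all: it simply records the statement and remarks that it follows as a corollary of a more general principle in Klaus's paper on applications of Birman--Schwinger. So there is nothing to compare your approach to on the paper's side. In the paper's actual setting the domain issues you worry about largely evaporate, since $\Sigma$ is a closed hypersurface and $W_r$ is a smooth (hence bounded) potential, so $K_\mu$ is automatically bounded and even compact; your more careful discussion would be what is needed to justify the lemma in the generality in which it is stated.
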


This result can be obtained as a corollary of a more general principle demonstrated by Klaus in the paper \cite{Martin}.

\section{Proof of Theorem \ref{thm:M}}

For the above proof, the following lemma will be used:

\begin{lemma}
Let $\Sigma^n$ be a $n$-dimensional closed hypersurface embedded in $\mathbb{R}^{n+1}$ and
consider the operator ${\Lr=-L_r-W_r^2}$. Suppose there $f\in L^2(\Sigma)$ satisfying:
\\[0,2cm]$1)\int_{\Sigma}f\roa d\Sigma=0$;
\\[0,2cm]$2)\langle R_0(\roa f),\roa f\rangle>\|f\|_2^2$.

Then the operator $\Lr$ has two negative eigenvalues
\end{lemma}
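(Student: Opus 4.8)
The plan is to run the Birman--Schwinger principle stated above while following the two top eigenvalues of the compact, positive, self-adjoint operator $K_\mu:=\roa\,(-L_r+\mu)^{-1}\roa$ as $\mu$ ranges over $(0,\infty)$. Since $\Sigma$ is closed and $H_{r+1}>0$ (so that $\roa>0$ is smooth and $-L_r$ is a nonnegative elliptic operator whose kernel is the constants), for every $\mu>0$ the resolvent $(-L_r+\mu)^{-1}$ exists, is compact and positive, and splits as $\tfrac1\mu\,\Pi_0+(-L_r+\mu)^{-1}\Pi_E$, where $\Pi_0$ is the $L^2$-projection onto the constants and $\Pi_E=I-\Pi_0$ the one onto the mean-zero subspace $E$. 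Setting $V(\Sigma):=\int_\Sigma d\Sigma$, this yields, for $g\in L^2(\Sigma)$,
\[
K_\mu g=\frac{1}{\mu\,V(\Sigma)}\Big(\int_\Sigma g\,\roa\,d\Sigma\Big)\roa\;+\;B_\mu g,\qquad B_\mu:=\roa\,(-L_r+\mu)^{-1}\Pi_E\,\roa ,
\]
with $B_\mu$ bounded uniformly in $\mu$ and converging, in operator norm as $\mu\to0^+$, to $B_0:=\roa\,R_0\,\Pi_E\,\roa$, $R_0=(-L_r)^{-1}$ on $E$. By the principle, $-\mu<0$ is an eigenvalue of $\Lr$ if and only if $1$ is an eigenvalue of $K_\mu$, and since $\roa>0$ the multiplication $u\mapsto\roa u$ matches the corresponding eigenspaces, so multiplicities agree; it therefore suffices to produce $0<\mu_2\le\mu_1$ for which $1$ is an eigenvalue of $K_{\mu_1}$ and of $K_{\mu_2}$ (of multiplicity at least $2$ if $\mu_1=\mu_2$).

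Let $\Lambda_1(\mu)\ge\Lambda_2(\mu)\ge\cdots\to0$ denote the eigenvalues of $K_\mu$. By the min--max formula and the continuity (in fact monotonicity) of $\mu\mapsto K_\mu$, each $\Lambda_k$ is continuous and non-increasing on $(0,\infty)$, and $\Lambda_1(\mu)\le\|\roa\|_\infty^2/\mu\to0$ as $\mu\to\infty$. The heart of the matter is to show
\[
\Lambda_2(\mu)>1\qquad\text{for all sufficiently small }\mu>0 .
\]
For this I take the two-dimensional test space $V:=\spann\{f,\roa\}$ --- genuinely two-dimensional, since $f\ne0$ (else hypothesis $2)$ fails) while $\langle f,\roa\rangle=\int_\Sigma f\roa\,d\Sigma=0$ and $\langle\roa,\roa\rangle=\int_\Sigma\roa^2\,d\Sigma>0$. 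Because $\langle f,\roa\rangle=0$, for $\psi=af+b\roa$ we get $\|\psi\|_2^2=a^2\|f\|_2^2+b^2\|\roa\|_2^2$, and $\psi\mapsto\langle K_\mu\psi,\psi\rangle-\|\psi\|_2^2$ is represented on $V$ by the symmetric matrix
\[
M_\mu=\begin{pmatrix}\langle K_\mu f,f\rangle-\|f\|_2^2 & \langle K_\mu f,\roa\rangle\\[3pt] \langle K_\mu f,\roa\rangle & \langle K_\mu\roa,\roa\rangle-\|\roa\|_2^2\end{pmatrix}.
\]
Hypothesis $1)$ says exactly that $\roa f\in E$, so $f$ does not feel the singular rank-one part of $K_\mu$: $\langle K_\mu f,f\rangle=\langle B_\mu f,f\rangle\to\langle R_0(\roa f),\roa f\rangle$, which by hypothesis $2)$ is $>\|f\|_2^2$, and $\langle K_\mu f,\roa\rangle=\langle B_\mu f,\roa\rangle$ stays bounded; whereas $\langle K_\mu\roa,\roa\rangle\ge\big(\int_\Sigma\roa^2\big)^2/(\mu\,V(\Sigma))\to\infty$. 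Hence for $\mu$ small the $(1,1)$ entry of $M_\mu$ is positive and $\det M_\mu\to+\infty$, so Sylvester's criterion makes $M_\mu$ positive definite; thus $\langle K_\mu\psi,\psi\rangle>\|\psi\|_2^2$ for every nonzero $\psi\in V$, and the min--max principle gives $\Lambda_2(\mu)\ge\inf_{0\ne\psi\in V}\langle K_\mu\psi,\psi\rangle/\|\psi\|_2^2>1$.

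The conclusion is then immediate: $\Lambda_2$ is continuous, is $>1$ for small $\mu$ and $\to0$ as $\mu\to\infty$, so by the intermediate value theorem there is $\mu_2>0$ with $\Lambda_2(\mu_2)=1$. If $\Lambda_1(\mu_2)=1$ too, then $1$ is an eigenvalue of $K_{\mu_2}$ of multiplicity $\ge2$ and so $-\mu_2$ is an eigenvalue of $\Lr$ of multiplicity $\ge2$; otherwise $\Lambda_1(\mu_2)>1$, and since $\Lambda_1$ is continuous with limit $0$ there is $\mu_1>\mu_2$ with $\Lambda_1(\mu_1)=1$, so $-\mu_1,-\mu_2$ are two distinct negative eigenvalues of $\Lr$. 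Either way $\Lr$ has two negative eigenvalues. I expect the one genuinely delicate point to be the $\mu\to0^+$ asymptotics of $K_\mu$: one must split off the $\tfrac1\mu$-singular piece of $(-L_r+\mu)^{-1}$ carried by the constants --- which is what unconditionally forces the first negative eigenvalue, via $\langle K_\mu\roa,\roa\rangle\to\infty$ --- from the convergent remainder $B_\mu$, and it is precisely the orthogonality $\int_\Sigma f\roa\,d\Sigma=0$ of hypothesis $1)$ that simultaneously makes $\langle K_\mu f,f\rangle$ converge to the quantity in hypothesis $2)$ and diagonalizes the Gram matrix of the test space, so that the positivity of $M_\mu$ is decided by Sylvester's criterion.
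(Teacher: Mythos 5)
Your argument is correct, but it organizes the eigenvalue count genuinely differently from the paper. The paper extracts only \emph{one} negative eigenvalue from Birman--Schwinger: it shows $\|K_\mu|_{[W_r]^\perp}\|>1$ for small $\mu>0$ (this is where hypotheses 1) and 2) enter, via the convergence $K_\mu\to K_0$ on $[W_r]^\perp$) while $\|K_\mu\|\to 0$ as $\mu\to\infty$, runs the intermediate value theorem on the single curve $\mu\mapsto\|K_\mu\|$ to produce one eigenvalue $-\mu_0<0$ of $\mathcal{L}_r$ with a mean-zero eigenfunction, and then obtains the second negative direction by an elementary variational contradiction: if $-\mu_0$ were the only negative eigenvalue, $\mathcal{L}_r$ would be nonnegative on the orthogonal complement of that eigenfunction, which contains the constant function $1$, whereas $\langle\mathcal{L}_r 1,1\rangle=-\int_\Sigma W_r^2\,d\Sigma<0$. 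You instead keep both eigenvalues inside the Birman--Schwinger picture: you isolate the rank-one singular part $\mu^{-1}\Pi_0$ of the resolvent, track the two top branches $\Lambda_1(\mu)\ge\Lambda_2(\mu)$ of $K_\mu$, and force $\Lambda_2(\mu)>1$ for small $\mu$ on the test plane spanned by $f$ and $W_r$; the blow-up of $\langle K_\mu W_r,W_r\rangle$ there is exactly the Birman--Schwinger shadow of the paper's constant test function (note $W_r=W_r\cdot 1$), and hypothesis 1) plays the same double role in both proofs, shielding $f$ from the $\mu^{-1}$ singularity so that hypothesis 2) survives the limit and simultaneously decoupling the two test directions. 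Your route buys a uniform treatment of the two eigenvalues, including a clean account of the degenerate case $\mu_1=\mu_2$ via multiplicity, at the cost of invoking the multiplicity-preserving form of the Birman--Schwinger correspondence and the continuity and monotonicity of the eigenvalue branches (standard facts, which you correctly flag); the paper's hybrid argument is shorter and needs only the operator norm of $K_\mu$, but leaves the second negative eigenvalue as a test-function byproduct rather than a resolved branch of the Birman--Schwinger family.
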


\begin{proof}
The proof is herein presented in three following steps:

 Recalling that for $\mu>0$ the resolvent operator $(-L_r+\mu)^{-1}$ is a bounded operator in $L^2(\Sigma)$ and $(-L_r)^{-1}$,  therefore, it is defined on the set of functions with zero mean.
\\[0,1cm]
\textbf{Step 1}: For $g\in L^2(\Sigma)$ with $\int_{\Sigma} g\hspace{0,05cm}d\Sigma=0$, we have
$$
\lim_{\mu\to 0}\|(-L_r)^{-1}g-(-L_r+\mu)^{-1}g\|_2=0 
$$
In fact, let
\begin{equation}
(-L_r+\mu)^{-1}g=\varphi, \label{eq1}
\end{equation} 
so that,  $$-L_r\varphi+\mu\varphi=g.$$ Therefore $\int_{\Sigma}\varphi d\Sigma=0$ because $\int_{\Sigma}L_r\varphi d\Sigma=0$. The latter follows from divergence theorem. By \eqref{eq1}, we get 
\begin{equation}
-L_r\varphi+\mu\varphi=g.\label{eq2}
\end{equation}
Applying $(-L_r)^{-1}$ in equation \eqref{eq2}, we obtain that $$\varphi+\mu(-L_r)^{-1}\varphi=(-L_r)^{-1}g$$ and therefore
\begin{equation}
\|(-L_r)^{-1}g-(-L_r+\mu)^{-1}g\|_2=\|\mu(-L_r)^{-1}f\|_2$$ $$\hspace{5,5cm}\le\mu\|{-L_r}^{-1}\|\|f\|_2$$ $$\hspace{7,5cm}=\mu\|{-L_r}^{-1}\|\|(-L_r+\mu)^{-1}g\|_2\label{eq3}
\end{equation}
Now in order to estimate the norm $\|(-L_r+\mu)^{-1}g\|_2$, we first multiply both sides of \eqref{eq2} for $\varphi$,
then apply divergence theorem to get
\begin{equation}
\int_{\Sigma}\langle P_r\nabla\varphi,\nabla\varphi\rangle d\Sigma+\mu\int_{\Sigma}\varphi^2d\Sigma=\int_{\Sigma} \varphi gd\Sigma.\label{eq4}
\end{equation}
Since $\varphi$ has zero mean, we can use Rayleigh's principle to deduce
$$\lambda_1(-L_r)\int_{\Sigma}\varphi^2d\Sigma\le\int_{\Sigma}\langle P_r\nabla\varphi,\nabla\varphi\rangle d\Sigma$$
On the other hand, Cauchy's inequality yields 
$$\int_{\Sigma}\varphi gd\Sigma\le\left(\frac{1}{4\varepsilon}\int_{\Sigma} g^2d\Sigma+\varepsilon\int_{\Sigma}\varphi^2d\Sigma\right)$$
Thus for $\varepsilon=({\lambdad}+\mu)/2$ we have $$\int_{\Sigma}\varphi^2d\Sigma\le\frac{1}{({\lambdad}+\mu)^2}\int_{\Sigma} g^2d\Sigma.$$
Consequently,
$$\|\Rm g\|_2\le\frac{1}{(\lambdad+\mu)}\|g\|_2.$$
From the estimate above, it follows that $$\|\Rm g-\Rg g\|_2\le\frac{|\mu-\gamma|\|g\|_2}{(\lambdad+\mu)(\lambdad+\gamma)}$$
Set $\Km:=\roa\Rmuv\roa$, now for $\mu$ positive and close to zero, $\Km$ has an eigenvalue greater than 1.

\textbf{Step 2}: Exist $-\mu_1<0$,such that $\|K_{\mu_1}|_{[\roa]^{\perp}}\|>1.$

The operator  $\Km|_{[\roa]^{\perp}}$ is compact, simetric and positive, soon $\|\Km|_{{[\roa]}^{\perp}}\|$ is an eigenvalue of $\Km|_{[\roa]^{\perp}}$.

Since,
$$\langle R_0(\roa f),\roa f\rangle>\|f\|_2^2,$$
and $\Km\to K_0$ in $\mathcal{B}([\roa]^{\perp})$, when $\mu\to 0$ with $K_0=\roa\Rz\roa$, so we have $\|K_0|_{[\roa]^{\perp}}\|>1$, thus, there exists $-\mu_1<0$ such as
$\|K_{\mu_1}|_{[\roa]^{\perp}}\|>1$.

\textbf{Step 3}: The Step 2 implies Lemma.

As $K _{\mu}$  is positive, we have $\|K_{\mu}\|$ which is the largest eigenvalue of $K_{\mu}$. Furthermore,
\begin{equation}
\label{Kmu}
\|K_{\mu}\|\leq \frac{1}{\lambda_1(-L_r|_{H^2(\Sigma)\cap[1]^{\perp}})+\mu}\|\roa\|_{\infty}^2.
\end{equation}
 Thus, the eigenvalue $\|K_{\mu}\|$ goes to zero when $\mu$ as to infinity. Particulary, there is $-\mu_2<0$ such as
$\|K_{\mu_2}\|<1$.

Hence, we show that there exist $\mu_2$ and $\mu_1$ constants, such that $$\|K_{\mu_2}\|<1<\|K_{\mu_1}\|,$$ and $\mu\mapsto\|K_\mu\|$ is continuous, we have by Intermediate Value Theorem, there is $-\mu_0$ such that $\|K_{\mu_0}\|=1$.

The Birman Schwinger principle,
$-\mu_0<0$ is eigenvalue of $\overline{\Lr}=\Lr|_{[1]^{\perp}}$, i.e., there is a nonzero function $f\in H^2(\Sigma)\cap [1]^{\perp}$
such that $\overline{\Lr}f =-\mu_0 f $. Naturally $-\mu_0$ is also eigenvalue of the operator $\Lr$.

Suppose by contradiction, that $-\mu_0$ is the only negative eigenvalue of $\Lr$.

In this case $-\mu_0$ would be the first eigenvalue with a
first self-space given by $[f]=\{c \, f; \ c\in \mathbb{R} \}$ and $\Lr$ restricted to the subspace $[f]^{\perp}$ would be a positive element. On the flip side we have $f\in [1]^{\perp}$.

Thus, the constant function $1\in [f]^{\perp}$, implies $\langle\Lr 1,1\rangle_2 \geq 0$. It is a contradiction.

Hence, the operator $\Lr$ has more than one negative eigenvalue, if there is ${f\in L^2(\Sigma)}$ satisfying $1)$ and $2)$. 
\end{proof}

Now let's proof of Theorem 1.2.

\begin{proof}
Let $\phi:\Sigma^n\to\R^{n+1}$ be an isometric immersion, by \cite{Hilario}, we have the following equation satisfied:
\begin{equation}
-L_r\phi=c_rH_{r+1}N,
\end{equation}
where $N$ is the normal vector of the surface.

Thus each coordinate satisfies $-L_r\phi_i=c_rH_{r+1}N_i$ , with $i\in\{1,...,n+1\}$.

Denote by $(\phi_i)_{\Sigma}:=\frac{1}{vol.\Sigma}\int_\Sigma \phi_id\Sigma$, and $(\phi)_{\Sigma}:=((\phi_1)_\Sigma,...,(\phi_{n+1})_\Sigma)$.

Choosing $f_i$ so that $$f_iW_r=c_rH_{r+1}N_i,$$ we have $$f_i=(c_rH_{r+1}^{\frac{r}{r+1}})^{\frac{1}{2}}N_i.$$
Observing that 
$$\Rz(W_rf_i)=\Rz(c_rH_{r+1}N_i)=\Rz(-L_r(\phi_i-(\phi_i)_\Sigma))=\phi_i-(\phi_i)_\Sigma.$$
By multiplying both sides, equal to $\phi_i-(\phi_i)_\Sigma$ and using Divergence Theorem, we conclude that
$$\langle\Rz(W_rf_i),W_rf_i\rangle_2=\langle P_r\nabla\phi_i,\nabla\phi_i\rangle_2=\int_\Sigma c_rH_{r+1}(\phi_i-(\phi_i)_\Sigma)N_i d\Sigma.$$

Summing up both sides with $i$ vanging from $1$ to $n+1$, we have  

$$\sum\limits_{i=1}^{n+1}\langle\Rz(W_rf_i),W_rf_i\rangle_2=\sum\limits_{i=1}^{n+1}\langle P_r\nabla\phi_i,\nabla\phi_i\rangle_2=\int_\Sigma c_rH_{r+1}\langle\phi-(\phi)_\Sigma,N\rangle d\Sigma.$$

 In \cite{Hilario}, we know from Minkowski's integral formula

$$\int_\Sigma H_rd\Sigma-\int_\Sigma H_{r+1}\langle\phi-(\phi)_\Sigma,N\rangle d\Sigma=0.$$

 Thus, replacing the previous expression, we have
 $$\sum\limits_{i=1}^{n+1}\langle\Rz(W_rf_i),W_rf_i\rangle_2=\sum\limits_{i=1}^{n+1}\langle P_r\nabla\phi_i,\nabla\phi_i\rangle_2=\int_\Sigma c_r H_rd\Sigma.$$
 
  By \cite{Hilario} using the classical inequality $H_r^{\frac{1}{r}}\ge H_{r+1}^{\frac{1}{r+1}}$, for $r\ge 1$, we have

$$\sum\limits_{i=1}^{n+1}\langle\Rz(W_rf_i),W_rf_i\rangle_2=\int_\Sigma c_r H_rd\Sigma\ge \int_\Sigma c_r H_{r+1}^{\frac{r}{r+1}}d\Sigma=\sum\limits_{i=1}^{n+1}\int_\Sigma c_r H_{r+1}^{\frac{r}{r+1}}N_i^2d\Sigma=\sum\limits_{i=1}^{n+1}\|f_i\|_2^2.$$

\begin{remark}
If $r=0$, we have written the sums above being identical and the only step that does not appear is the gap between the bends, however it is easy to see that the rest of the argument is following analogous to other cases.
\end{remark}

Define $d_i=\langle\Rz(W_rf_i),W_rf_i\rangle_2-\|f_i\|_2^2$,thus $\sum\limits_{i=1}^{n+1}d_i\ge 0$ and then two possibilities may occur:

1) There is $i\in\{1, ..., n + 1 \}$ such that $d_i>0$;

2) $d_i=0$, for all $i\in\{1,...,n+1\}$.

If 1) occurs, we have $f_i$ being the Lemma conditions and therefore
$$\lambda_2(\Lr)<0.$$

 If 2) occurs, we have all the $d_i$  void. For this we use Lagrange multipliers.
\vspace{0,3cm}

 Now consider the functionals $\Psi , \Phi:L^2(\Sigma)\to \mathbb{R}$ given by
 
$$\Psi(f)=\langle R_0(W_r f),W_r f \rangle-\|f\|_2^2,\;\;\;\;\;\;\Phi(f)=\langle W_r,f \rangle_2$$
 and the set of constraints
$$
S=\{f\in L^2(\Sigma);\;\Phi(f)=\langle W_r,f \rangle_2=0 \}.
$$
 We have to study two possibilities: 

\noindent 1) $\inf\{\Psi(f);\; f\in S \}<0 $ or

\noindent 2) $\inf\{\Psi(f);\; f\in S \} = 0$, since $0\in S$ and $\Psi (0)=0$.

  In the first case, will be the function $f\in S$ such that $\Psi(f)<0$.

So $f$ is a critical function for $\Psi$ on $S$ and then the method of Lagrange multipliers have to exist $\Gamma\in\R$, such that
$$\Psi'(f)=\Gamma\Phi'(f)$$
which resulted in the following Euler-Lagrange equation
$$W_r\Rz(W_rf)-f=\Gamma W_r.$$
 Multiplying both sides of the above equation for $f\in S$ and integrating, we have

$$0=\Gamma\langle  W_r,f\rangle=\langle R_0(W_r f),W_r f \rangle-\|f\|_2^2<0.$$
Thus the contradiction. This is the case \noindent 1) not occuring.
In the second case, we have seen that each $f_i\in S$ and $\Psi(f_i)={\inf\{\Psi(f);\;{f\in S}\}=0}$. By the Method of Lagrange Multipliers, there exists $\Gamma \in\mathbb{R}$ such that ${\Psi'(f_i)=\Gamma \Phi'(f_i)}$.
Hence, we obtain that each $f_i$ satisfies the following Euler-Lagrange equation,
$$W_r\Rz(W_rf_i)=f_i+\Gamma W_r,$$ therefore we conclude that
$$W_r(\Rz(W_rf_i)-\Gamma)=f_i,$$
$$W_r(\phi_i-(\phi_i)_\Sigma-\Gamma)=f_i,$$
then 
$$\phi_i-(\phi_i)_\Sigma-\Gamma=\frac{f_i}{W_r}=H_{r+1}^{\frac{1}{r+1}}N_i$$

 Thus, have its version vector

$$\phi-(\phi)_\Sigma-\Gamma=H_{r+1}^{\frac{1}{r+1}}N.$$

 Differentiating the above expression along any curve $\Sigma$, we conclude that the derivative of $H_{r+1}^{\frac{1}{r+1}}$ is zero, so $H_{r+1}$ is constant, then $\Sigma^n=\mathbb{S}^n$ by \cite{Ros}.

 In fact in this case we have $\lambda_2(\Lr)=0$, as we have
$$W_r(\phi_i-(\phi_i)_\Sigma-\Gamma)=f_i,$$
and multiplying both sides by the expression $W_r$, we obtain
$$W_r^2(\phi_i-(\phi_i)_\Sigma-\Gamma)=W_rf_i=-L_r(\phi_i-(\phi_i)_\Sigma-\Gamma),$$
thus $\psi=\phi_i-(\phi_i)_\Sigma-\Gamma$ is the second eigenfunction of $\Lr=-L_r-W_r^2$, and $\Lr\psi=0$. 

Define the operator $T_r=-L_r-c_r\|\sA\|^{r+2}$.

\begin{corollary}
Under the same conditions of Theorem 2, $\lambda_2(T_r)\le 0$ with equality if and only if $\Sigma^n=\mathbb{S}^n$.
\end{corollary}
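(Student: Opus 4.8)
The plan is to reduce the corollary to Theorem~\ref{thm:M} by a pointwise comparison of the zeroth–order terms of $T_r$ and $\mathcal{L}_r=-L_r-W_r^2$. First I would establish the pointwise estimate
$$
\|\mathbf{A}\|^{r+2}\ \ge\ H_{r+1}^{\frac{r+2}{r+1}}\qquad\text{on }\Sigma^n,
$$
with equality at a point if and only if that point is umbilic. To prove it, observe that $H_{r+1}>0$ puts the principal curvatures $\kappa_1,\dots,\kappa_n$ in the corresponding G{\aa}rding cone, so $H_j>0$ for $1\le j\le r+1$ and the Newton--Maclaurin chain $H_1\ge H_2^{1/2}\ge\cdots\ge H_{r+1}^{1/(r+1)}$ holds (this is the same type of inequality already used in the proof of Theorem~\ref{thm:M}). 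On the other hand one has $\|\mathbf{A}\|\ge H_1$, with equality if and only if all $\kappa_i$ coincide. Combining, $\|\mathbf{A}\|^2\ge H_1^2\ge H_{r+1}^{2/(r+1)}$, and raising to the power $(r+2)/2$ gives the claim; at an umbilic point every inequality in the chain is an equality, so there $\|\mathbf{A}\|^{r+2}=H_{r+1}^{(r+2)/(r+1)}$.

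With this in hand, write
$$
T_r=\mathcal{L}_r-c_r\big(\|\mathbf{A}\|^{r+2}-H_{r+1}^{\frac{r+2}{r+1}}\big),
$$
where $c_r=(n-r)\binom{n}{r}>0$ and the term in parentheses is nonnegative and continuous, hence bounded, on the compact manifold $\Sigma^n$. Both $T_r$ and $\mathcal{L}_r$ are bounded lower-order perturbations of the same elliptic operator $-L_r$ (elliptic precisely because $H_{r+1}>0$) and share the same form domain, so $\langle T_r u,u\rangle\le\langle\mathcal{L}_r u,u\rangle$ for every admissible $u$. By the min--max characterization of the second eigenvalue this yields $\lambda_2(T_r)\le\lambda_2(\mathcal{L}_r)$, and Theorem~\ref{thm:M} gives $\lambda_2(\mathcal{L}_r)\le 0$; hence $\lambda_2(T_r)\le 0$.

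For the rigidity statement, suppose $\lambda_2(T_r)=0$. Then $0=\lambda_2(T_r)\le\lambda_2(\mathcal{L}_r)\le 0$ forces $\lambda_2(\mathcal{L}_r)=0$, and Theorem~\ref{thm:M} gives $\Sigma^n=\mathbb{S}^n$. Conversely, on $\mathbb{S}^n$ every point is umbilic, so by the equality case above $\|\mathbf{A}\|^{r+2}\equiv H_{r+1}^{(r+2)/(r+1)}$, whence $T_r=\mathcal{L}_r$ as operators on $\mathbb{S}^n$ and therefore $\lambda_2(T_r)=\lambda_2(\mathcal{L}_r)=0$, again by Theorem~\ref{thm:M}.

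The one step requiring genuine care is the monotonicity $\lambda_2(T_r)\le\lambda_2(\mathcal{L}_r)$: one must make sure the two quadratic forms (after restriction to $[1]^{\perp}$, as in the Birman--Schwinger argument) are defined on a common form core on which the pointwise potential inequality translates into an inequality of Rayleigh quotients, so that the variational principle applies term by term. Since $H_{r+1}>0$ makes $L_r$ elliptic and both potentials are bounded on $\Sigma^n$, this is routine. A minor bookkeeping point is to fix the normalization of $\|\mathbf{A}\|$ so that the bound $\|\mathbf{A}\|\ge H_1$ is sharp exactly at umbilic points (this is what makes the $\Sigma^n=\mathbb{S}^n$ direction of the equality case work); with that convention the argument is exactly as above.
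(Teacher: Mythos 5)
Your proof is correct and is precisely the argument the paper intends: its one-line justification invokes exactly ``Jensen's inequality'' (your power-mean/Maclaurin chain $\|\mathbf{A}\|\ge H_1\ge H_{r+1}^{1/(r+1)}$, giving the pointwise potential comparison $c_r\|\mathbf{A}\|^{r+2}\ge W_r^2$) together with the min--max principle (your monotonicity $\lambda_2(T_r)\le\lambda_2(\mathcal{L}_r)$ plus the reduction of both directions of the equality case to Theorem~\ref{thm:M}), so you have simply supplied the details the paper omits. The normalization caveat you flag is genuine and is left equally unaddressed in the paper: with the unnormalized convention $\|\mathbf{A}\|^2=\sum_i\kappa_i^2$ one gets $\|\mathbf{A}\|^2= n H_1^2>H_1^2$ at umbilic points for $n\ge 2$, so the equality case on the sphere would fail, and the corollary is true only with $\|\mathbf{A}\|^2=\frac{1}{n}\sum_i\kappa_i^2$ (or after inserting the corresponding dimensional constant).
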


The proof of the corollary follows immediately from the Jensen's inequality and the min-max principle. This finishes the proof.

\end{proof}

\newpage

%\newpage

\vskip1truecm

\scshape

\noindent L. I. S. Souza

\noindent Universidade Federal do Cear\'a

\noindent Departmento de Matem\'atica

\noindent Av. Humberto Monte S/N, 60455-760, Bl 914

\noindent E-mail address: \qquad {\tt leoivo@alu.ufc.br}

\scshape
\bigskip

\vskip 1truecm

\scshape


\begin{thebibliography}{00}
 

 \bibitem[1]{Ros}{Antonio Ros}.: Compact Hypersurfaces with Constant Higher Order Mean Curvatures,Revista Matematica Iberoamericana · January (1987)
   
   \bibitem[2]{Evans} Harrel, Evans and Loss, Michael, Harrel: \textit{On the Laplace Operator Penalized by Mean Curvature}, Communications in Mathematical Physics 195  643-650 (1998)
   
   \bibitem[3]{Hilario}{Hilario Alencar,Manfredo do Carmo and Harold Rosemberg}: On the First Eigenvalue of the Linearized Operator of the r-th Mean Curvature of a Hypersurface,Annals of Global Analysis and Geometry 11: 387-395, (1993)
     
         \bibitem[4]{Joao Lucas}{Joao Lucas Marques Barbosa and Antonio Gervasio Colares}: Stability of Hypersurfaces with Constant r-Mean Curvature, Annals of Global Analysis and Geometry 15: 277-297, (1997)
        
 \bibitem[5]{Alias}{Luis J. Alias and J. Miguel Malacarne}.: On The First Eigenvalue of The Linearized
Operator of The Higher Order Mean Curvature for Closed Hypersurfaces In Space Forms, Illinoi's Journal of Mathematics Volume 48, Number 1, Spring, Pages 219-240 S 0019-2082 (2004)

       
         
       \bibitem[6]{Martin} Martin, Klaus: Some applications of the Birman-Schwinger principle, Helvetica Physica Acta, Vol. 55, 49-68, (1982)



\end{thebibliography}
\end{document}